\documentclass{svproc}
\usepackage[
	english,					
]{babel} 					
\usepackage[
	activate,				
	kerning=true,			
	factor=1100,				
	final,					
	spacing=true,			
	stretch=10,				
	shrink=10,				
	tracking=true,			
]{microtype}	
\usepackage[
	intlimits,				
]{amsmath} 					
\usepackage{amssymb}			
\usepackage[
	makeroom					
]{cancel}  					
\usepackage{courier}			
\usepackage[
	scaled=0.95,				
]{helvet}					
\usepackage{icomma}			
\usepackage{mathtools}		
\usepackage{nicefrac}		
\usepackage[
	short,					
]{optidef}					
\usepackage{upgreek}			
\usepackage[
	algochapter,				
	ruled,					
]{algorithm2e}				
\usepackage{blindtext}		
\usepackage[
	font={small},			
	labelfont=bf,			
	textformat=period,		
]{caption}					
\usepackage{enumitem}		
\usepackage{footnote} 		
\usepackage[
	framemethod=TikZ,		
]{mdframed} 					
\usepackage[
	all						
]{nowidow}					
\usepackage[
	above,					
	below,					
]{placeins}					
\usepackage[
	onehalfspacing			
]{setspace} 					
\usepackage{subcaption}		
\usepackage{xcolor}			
\usepackage{svg} 
\usepackage{float} 
\newtheorem{theorem1}{Theorem}
\newtheorem{remark1}[theorem1]{Remark}

\usepackage[a-2b]{pdfx}		
\newcommand{\R}{\mathbb{R}}
\newcommand{\uF}{u_h}

\makeatletter
\def\mathcenterto#1#2{\mathclap{\phantom{#1}\mathclap{#2}}\phantom{#1}}
\let\old@widetilde\widetilde
\def\td#1#2{\mathcenterto{#2}{\old@widetilde{\mathcenterto{#1}{#2\,}}}}
\let\old@widehat\widehat
\def\widehatto#1#2{\mathcenterto{#2}{\old@widehat{\mathcenterto{#1}{#2\,}}}}
\makeatother

\usepackage[backend=bibtex]{biblatex}
\usepackage{url}

\begin{document}
\mainmatter              
\title{An Eikonal Approach for Globally Optimal Free Flight Trajectories}
\titlerunning{Globally Optimal Free Flight Trajectories}  
%
\author{Ralf Borndörfer\inst{1,2} \and Arturas Jocas\inst{1,2} \and Martin Weiser\inst{1}}
\authorrunning{Ralf Borndörfer et al.} 
%
%
\institute{Zuse Institute Berlin, Takustr. 7, 14195 Berlin, Germany\\
\and
Freie Universität Berlin, Arnimallee 14, 14195 Berlin, Germany
}

\maketitle              

\begin{abstract}

We present an eikonal-based approach that is capable of finding a continuous globally optimal trajectory for an aircraft in a stationary wind field. This minimizes emissions and fuel consumption. If the destination is close to a cut locus of the associated Hamilton-Jacobi-Bellman equation, small numerical discretization errors can lead to selecting a merely locally optimal trajectory and missing the globally optimal one. Based on finite element error estimates, we construct a trust region around the cut loci in order to guarantee uniqueness of trajectories for destinations sufficiently far from cut loci.

\keywords{free flight, Hamilton-Jacobi-Bellman equation, cut loci, global optimality}
\end{abstract}
\section{Introduction}

The aviation sector is expanding annually, the market size in 2025 being nearly 359 billion
USD \cite{mordor_intelligence}. Every day, approximately 2.8 million people commute
within Europe using airplanes \cite{Eurostat2025} and 13 million worldwide  \cite{iata_passengers_2024}. This requires about 99 billion gallons of jet fuel per year \cite{iata_outlook_2024}, which accounts for roughly 10\% of the energy consumption of the transportation sector \cite{irena_aviation_transport}, which in turn has a share of 30\% of the total worldwide energy consumption, and the same holds for the associated emissions.  

The fuel consumption and hence the emissions of flights are essentially proportional to their duration, which depends on the flight trajectory. In spatially varying wind fields, the fastest trajectory is in general not the great circle arc connecting origin and destination. Indeed, airlines routinely plan aircraft routes in order to take the effect of tail- and headwinds into account, asking for globally optimal trajectories with respect to shortest flight duration. This is traditionally done on a three-dimensional airway network consisting of 400,000 waypoints connected by 900,000 arcs on each of 43 horizontal flight levels by Dijkstra-type methods~\cite{Blanco2023}, taking a number of additional flight restrictions into account~\cite{RAD}. The computation of horizontal and vertical profiles is typically decoupled~\cite{NgSridharGrabbe2014}.

We are concerned in this paper with improving the horizontal trajectories from routes on a discrete airway network to arbitrary so-called free flight trajectories~\cite{RTCA1995}. The mathematical challenge in extending the design space is to solve a continuous free flight optimization problem to global optimality. This is difficult because of the possible existence of multiple local minimizing trajectories, see Fig.~\ref{fig:cut_locus_example}. 

\begin{figure}
    \centering        
    \includegraphics[width=0.8\textwidth, trim={1cm 0.5cm 1cm 1.4cm}, clip]{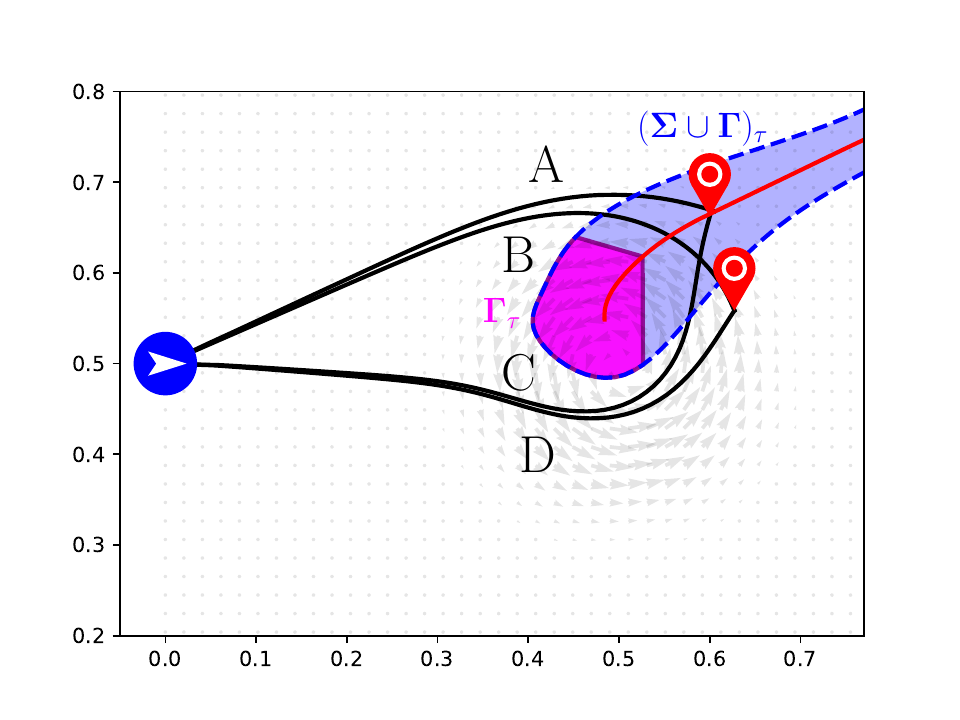}
    \caption{Optimal trajectories from the blue origin to two different red destination points in a single-vortex wind field are indicated as solid black lines; A, C, D are globally optimal, B is only locally optimal. Estimated singularities, so called cut loci, which can be reached by multiple globally optimal trajectories, are shown as a red line. The blue region indicates a (scaled) trust region around the cut loci; the magenta region depicts the region around the set of conjugate points, see Def.~\ref{def:nearly_conj_pt} below}
    \label{fig:cut_locus_example}
\end{figure}
Depending on the considered scenario and the associated model, various approaches have been discussed in the literature, see, e.g., \cite{Danecker_thesis} for a comprehensive survey.
We propose in this paper the use of dynamic programming~\cite{Bellman1957} methods for solving the Hamilton-Jacobi-Bellman (HJB) equation corresponding to the horizontal free flight optimization problem. The first algorithms of this type appeared in the 1990s. Tsitsiklis and later Sethian proposed Dijkstra-type, level set, and Fast Marching Methods (FMM)~\cite{Tsitsiklis1995,FMM, FMM2} focusing on the isotropic eikonal equation. However, free flight optimization problem is not isotropic due to the directional dependence of the wind, and a generalized eikonal equation representation is required. The proposed solution, Sethian's Ordered Upwind Method (OUM)~\cite{SethianOUM_first}, is effectively an extension of FMMs to generalized eikonal equations utilizing semi-Lagrangian (Hopf-Lax) or Eulerian discretizations. Girardet et al.~\cite{GirardetEtAl2014} have applied such semi-Lagrangian OUMs to find optimal trajectories in a planar free flight setting. Another approach proposed by Bornemann and Rasch~\cite{Bornemann2006, Rasch2007} uses a linear finite element (FE) discretization which is equivalent to the semi-Lagrangian discretization. The key difference is how the resulting system of equations is solved: instead of a Dijkstra-like approach, iterative approaches resembling Jacobi and Gauss-Seidel iterations are proposed. Wells et al.~\cite{WellsEtAl2023} have used a Jacobi-type iteration in the study of optimizing horizontal flight profiles across the North Atlantic, showing potential air-distance reductions of 4.2\% on average. However, as far as the authors of this paper are aware, there has been no attempt to establish rigorous global optimality guarantees for such approaches yet. In this paper, we also solve the HJB equation corresponding to the planar free flight setting using linear finite elements and a Hopf-Lax update scheme. We perform an investigation of the singularities of the HJB equation which paves the way for proving the global optimality of such an approach.

The paper is organized as follows. In Sec.~\ref{sec:eikonal}, we introduce the Free Flight Trajectory Optimization Problem with the corresponding HJB equation and provide a singularity analysis based on the results of Pignotti~\cite{PIGNOTTI2002681}. We construct an arbitrarily accurate approximation of the computational domain on which the value function is sufficiently regular, which allows to bound the discretization error by an arbitrarily small quantity $\varepsilon$~\cite{Rasch2007}. We then define a temporal trust region of size $2 \varepsilon$ around the singularities of the HJB solution. If the flight destination is outside of this arbitrarily small region, the numerical HJB solution provides a provably good approximation of the unique globally optimal trajectory. Finally, Sec.~\ref{sec:results} gives computational results verifying that the discretization error estimate is linear for various wind fields. In addition, we verify that outside of the aforementioned temporal trust region the destinations indeed have a unique globally optimal trajectory.

\section{An Eikonal Approach} \label{sec:eikonal}

In this section we model the horizontal Free Flight Trajectory Optimization Problem (FFTOP) on a domain $\mathbb{R}^2$. We consider a constant airspeed $\overline{v} \in \mathbb{R}$, a stationary, bounded, and continuously differentiable wind field $w \in BC^1(\mathbb{R}^2,\mathbb{R}^2$), and an objective of minimizing the travel time. Let an aircraft start at an origin $x_0 \in \mathbb{R}^2$ at time $t = 0$ and move with constant airspeed $\overline{v}$ until it reaches a destination point $x_d  \in \mathbb{R}^2$. It follows a trajectory $y$ governed by the equations of motion
\begin{equation}
\left\{
    \begin{aligned}
    y_t(t) &= f(y(t), a(t)) \, a(t), \quad t \in [0, T_a], \\
    y(0) &= x_0, \; y(T_a) = x_d,\label{chapter2:eq_motion}
    \end{aligned}
\right.
\end{equation}
where $a : [0, T_a] \to S_1$ is a measurable directional control function with control set $S_1 = \{d \in \mathbb{R}^2 \mid \lVert d \rVert = 1 \}$. Note that the arrival or terminal time $T_a$ depends on the chosen control. This is the so-called \textit{Zermelo problem} proposed in 1930s~\cite{Zermelo1930}. Zermelo's work outlines the necessary conditions for an optimal control in the 2D case. In our case, the ground speed in direction $p$ is 
\begin{equation}
    f(x, p) = \sqrt{\overline{v}^2 - \lVert w(x) \rVert^2 + ( w(x)^T p)^2 } + w(x) ^T p \quad \forall x \in \R^2, p \in S_1  ,
\end{equation}
where we assume $\lVert w \rVert < \overline{v}$ everywhere. The FFTOP is then defined as
\begin{equation}  
\min\limits_{a \in \mathcal{A}} T_a \quad \text{ s.t. } 
\left\{
    \begin{aligned}
    &y_t(t) = f(y(t), a(t)) \, a(t), \quad t \in [0, T_a], \\
    &y(0) = x_0, \; y(T_a) = x_d,\\
    &\mathcal{A} = \{a : [0, T_a] \to S_1 \mid a \text{ is measurable}\}.
    \end{aligned}
\right.
\label{chapter2:FFTOP}
\end{equation}
Note that a fixed control $a$ induces a unique trajectory $y^{x_0,a}$, however, the uniqueness of the minimizing control $a$ itself is not guaranteed. Fig.~\ref{fig:cut_locus_example} shows an example of a destination reached by two distinct globally optimal trajectories (A and C). It also shows a destination reached by one globally and one locally optimal trajectory (B and D) which illustrates that the existence of multiple local minimizers makes solving FFTOP~\eqref{chapter2:FFTOP} to global optimality difficult with locally convergent methods. 

In contrast, solving the corresponding Hamilton-Jacobi-Bellman (HJB) equation provides global information on the trajectories, see, e.g.,~\cite{evans10}. For a fixed control ${a}$ and origin $x_0$, we denote the minimum arrival time at some arbitrary point $x$ by
\begin{equation}\label{eq:arrival-time}
    {\tau}(x, {a}) = \inf \{ t \geq 0 \mid {y}^{x_0, {a}}(t) = x \},
\end{equation}
or ${\tau}(x, {a}) = \infty$, if $y^{x_0, a}(t) \neq x, \forall t \geq 0$. The minimum arrival time over all controls, given by the \textit{value function}, is 
\begin{equation} \label{chapter2:value_func}
u(x) =
\begin{aligned}
      &\min \{ {\tau}(x, {a}) \mid {a} \in \mathcal{A}  \},  &&x \in \R^2 .
\end{aligned}
\end{equation}
Utilizing Bellman's principle of optimality~\cite{Bellman1957}, one can show, see, e.g., \cite{cannarsa2004}, that $u$ is a weak solution of the stationary HJB equation
\begin{equation}
    \begin{aligned}
    H (x, u_x(x)) = 0&, \quad x \in \R^2 \setminus \{x_0\},\\
    u(x_0) = 0&,
    \end{aligned}  
    \label{chapter2:hjb_pde-1}
\end{equation}
where the Hamiltonian $H$ is defined as
\begin{equation}
    H(x, p) = -\min\limits_{a \in S_1} \left[ p^Ta f(x, -a) \right] - 1.
\end{equation}
The accuracy of numerical discretizations for~\eqref{chapter2:hjb_pde-1} often suffers from the intrinsic singularity at the origin $x_0$. One therefore considers a closed origin 
set $\mathcal{K}$ with $x_0 \in \text{int}(\mathcal{K})$, and the restricted HJB equation
\begin{equation}
    \begin{aligned}
     H (x, u_x(x)) = 0&, \quad x \in \R^2 \setminus \mathcal{K},\\
    u(x) = g(x)&, \quad x \in \mathcal{K}.
    \end{aligned}  
    \label{chapter2:hjb_pde}
\end{equation}
For the regularity it is sufficient that $\partial \mathcal{K}$ is of class ${C}^1$. In addition, we assume the existence of a boundary condition oracle $g$ such that $g = u \mid_{\mathcal{K}} $ holds.

\subsection{Regularity of Hamilton-Jacobi-Bellman Solutions}
\label{section:regularity}
Due to the non-uniqueness of minimizing controls for FFTOP, the corresponding HJB equation $\eqref{chapter2:hjb_pde}$ usually does not admit classical solutions. Therefore, an alternative notion of unique \textit{viscosity solutions} was introduced by Crandall \& Lions \cite{crandall1983}.
\begin{definition}
    A bounded and uniformly continuous function $u$ is the \textit{viscosity solution} of equation $\eqref{chapter2:hjb_pde}$ if the following conditions hold for each smooth test function $\phi \in C^\infty_c (\R^2, \mathbb{R})$:

(i) if $u - \phi$ has a local minimum at $x \in \R^2$, then
$
    H(x, \phi_x(x)) \geq 0,
$

(ii) if $u - \phi$ has a local maximum at $x \in \R^2$, then
$
    H(x, \phi_x(x)) \leq 0.
$
\label{eq:viscosity_soln}
\end{definition}
\noindent It can be shown that viscosity solutions are unique \cite{evans10}.

Our main result Thm.~$\ref{theorem:cutting_out_domain}$ is based on notation and results following from the regularity analysis of Pignotti~\cite{PIGNOTTI2002681}. She showed that FFTOP is equivalent to solving, for a fixed $\xi \in \partial \mathcal{K}$, the Hamiltonian system 
\begin{equation}
\begin{cases}
    y_t(t) = H_p(y(t), p(t))   \\
    p_t(t) = -H_x(y(t), p(t))
\end{cases}
\label{eq:HJB_pignotti}
\end{equation}
with initial conditions
\begin{equation}
\begin{cases}
    y(0) = \xi   \\
    p(0) = g_x(\xi) + \mu(\xi)\nu(\xi)
\end{cases}
\end{equation}
and solution $y(\xi, t)$, $p(\xi, t)$. Here, $\mu(\xi)>0$ is a unique constant, such that $H(\xi, g_x(\xi) + \mu(\xi)\nu(\xi)) = 0$ holds with $\nu(\xi)$ being the outer normal unit vector at $\xi \in \partial\mathcal{K}$. 
Note that, the value function $u$ is defined implicitly through $y(\xi, u(x)) = x$ for some $\xi \in \partial \mathcal{K}$, as well as $p(\xi, u(x)) = u_x(y(\xi, u(x)))$.

We define the \emph{singular set}  of $u$ as $\Sigma := \{x \in \R^2 \setminus \mathcal{K} \mid u \text{ not differentiable in } x\}$. 
Denote for each destination $x$ the set of optimal starting points on $\partial \mathcal{K}$ by $\mathcal{F}(x) := \{\xi \in \partial \mathcal{K} \mid y(\xi, u(x)) = x  \}$,
by $y_{\xi, t}(\cdot)$ and $p_{\xi, t}(\cdot)$ the Jacobian of $y$ and $p$ with respect to the pair $(\xi, t)$, and by $\Gamma := \{x\in\R^2\backslash \mathcal{K} \mid \exists \xi \in \mathcal{F}(x): \det y_{\xi, t}(\xi, u(x)) = 0\}$ the set of \emph{conjugate points}, where the derivative $y_\xi$ is understood to be only in tangential direction.

If one additionally assumes that $w \in C^3(\R^2, \mathbb{R}^2)$ and $\partial \mathcal{K}$ is of class ${C}^3$, the value function $u$ is as regular in the complement of $\Sigma \cup \Gamma$ as the speed function and the topology of the origin set allows it to be:

\begin{theorem} \cite[Theorem 3.1]{PIGNOTTI2002681}
Let $u$ be a solution of $\eqref{chapter2:hjb_pde}$. Denote the set of regular points as $\mathcal{R} = \R^2 \setminus \{ \mathcal{K} \cup \Sigma \cup \Gamma \}$. If $w \in C^3(\R^2, \mathbb{R}^2)$ and $\partial \mathcal{K}$ is $C^3$, then $\Sigma \cup \Gamma$ is closed and the value function $u \in C^2 (\mathcal{R} , \mathbb{R})$.    \label{theorem:smoothness_value_function}
\end{theorem}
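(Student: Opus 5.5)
We plan to prove the statement by the method of characteristics. The $C^2$ regularity of $u$ on $\mathcal{R}$ comes from the inverse function theorem applied to the characteristic map $\Phi(\xi,t) := y(\xi,t)$. Closedness of $\Sigma\cup\Gamma$ then follows from a stability argument for optimal starting points.

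\textbf{Step 1 (regularity of the characteristic flow).} Since $w\in C^3$, the ground speed $f$ is $C^3$ in $x$ away from the degenerate set, because $\lVert w\rVert<\overline{v}$ keeps the square root positive. Hence the Hamiltonian $H(x,p)$ is $C^3$ on $\{p\neq 0\}$: it is positively homogeneous of degree one in $p$, and the minimizing direction depends smoothly on $p$ by strict convexity of the sublevel sets of $p\mapsto H$. The right-hand side of \eqref{eq:HJB_pignotti} is therefore $C^2$. Since $\partial\mathcal{K}$ and $g$ are $C^3$, the map $\xi\mapsto g_x(\xi)+\mu(\xi)\nu(\xi)$ is $C^2$. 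Here $\mu$ is obtained by the implicit function theorem from $H=0$, using $\partial_\mu H\neq 0$. Consequently $(\xi,t)\mapsto(y,p)$ is $C^2$.

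\textbf{Step 2 (closedness of $\Sigma\cup\Gamma$).} Take $x_n\to x$ with $x_n\in\Sigma\cup\Gamma$, and suppose for contradiction that $x\in\mathcal{R}$. By compactness of $\partial\mathcal{K}$ (or local compactness) and continuity of $u$ and $y$, limits of elements of $\mathcal{F}(x_n)$ lie in $\mathcal{F}(x)$. At a point of $\mathcal{R}$, differentiability of $u$ forces $\mathcal{F}(x)$ to be a singleton $\{\xi_0\}$, because distinct optimal characteristics would yield distinct reachable gradients $p$. By definition of $\mathcal{R}$, moreover, $\det y_{\xi,t}(\xi_0,u(x))\neq0$. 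The inverse function theorem then makes $\Phi$ a $C^2$ diffeomorphism from a neighborhood $U$ of $(\xi_0,u(x))$ onto a neighborhood $V$ of $x$. Upper semicontinuity of $\mathcal{F}$ gives $\mathcal{F}(x_n)\subset\pi(U)$ for large $n$, so each $x_n$ has a unique optimal start, with nondegenerate Jacobian. We will then show $x_n\notin\Sigma$, using the standard fact that $u$ is semiconcave and its reachable gradients are the $p(\xi,u(x))$ with $\xi\in\mathcal{F}(x)$. This also gives $x_n\notin\Gamma$, contradicting $x_n\in\Sigma\cup\Gamma$.

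\textbf{Step 3 ($u\in C^2(\mathcal{R})$).} On $V$ we have $u(z)=t(\Phi^{-1}(z))$, where the second component of $\Phi^{-1}$ is $C^2$. The uniqueness and local stability of $\mathcal{F}$ from Step 2 ensure that the local inverse picks the optimal branch, so $u$ is $C^2$ on $V$. As a cross-check, the identity $u_x=p\circ\Phi^{-1}$ with $p\in C^2$ even gives $u_x\in C^2$ wherever the reasoning applies. We will be cautious about the loss of one derivative in Step 1, where $H_p$ and $H_x$ are only $C^2$, so we claim only $u\in C^2$.

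\textbf{Main obstacle.} The delicate point is Step 2: showing that near a regular point no other, far-away characteristic becomes optimal. This requires upper semicontinuity of $\mathcal{F}$ together with the identification of reachable gradients with characteristic costates. We would first establish the semiconcavity of $u$ via the standard Cannarsa–Sinestrari argument for minimum time problems, using the $C^{1,1}$ data, and then deduce the gradient characterization.
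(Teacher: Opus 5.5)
The paper gives no proof of this statement. It is quoted from Pignotti's Theorem~3.1, so your proposal is best read as a self-contained reconstruction of the cited result, and as such it is sound. It is the classical method-of-characteristics argument: a unique, non-conjugate optimal characteristic at each point of $\mathcal{R}$, the inverse function theorem for $\Phi$, upper semicontinuity of $\mathcal{F}$, and the local representation of $u$ through $\Phi^{-1}$.

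The citation buys the paper brevity. Your version buys visibility of hypotheses the statement leaves implicit, such as $g\in C^3$ on $\partial\mathcal{K}$ and the non-degeneracy $\partial_\mu H=H_p\cdot\nu\neq 0$ of the initial costate. It also shows which ingredients are really needed.

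In fact, the step you flag as the main obstacle can be bypassed.
\begin{itemize}
\item At $x\in\mathcal{R}$, every $\xi\in\mathcal{F}(x)$ is non-conjugate by the definition of $\Gamma$.
\item The inverse function theorem around each $(\xi,u(x))$ then gives a local $C^2$ function $v_\xi\ge u$, namely the arrival time along nearby admissible characteristics, with $v_\xi(x)=u(x)$ and $\nabla v_\xi(x)=p(\xi,u(x))$.
\item Since $v_\xi-u$ has a minimum at $x$, differentiability of $u$ at $x$ forces all these costates to equal $u_x(x)$.
\item Backward uniqueness for the Hamiltonian system then makes $\mathcal{F}(x)$ a singleton.
\item Finally, $x_n\notin\Sigma$ follows directly from your Step~3, because $u$ is $C^2$ on a whole neighborhood of $x$.
\end{itemize}
So neither semiconcavity nor the characterization of reachable gradients is required.

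One small point concerns closedness. Step~2 shows that $\mathcal{R}$ is open, i.e.\ that $\Sigma\cup\Gamma$ is closed relative to $\R^2\setminus\mathcal{K}$. If closedness in all of $\R^2$ is meant, you must also exclude accumulation at $\partial\mathcal{K}$. The same inverse-function argument at $(\xi_0,0)$ gives this, since there $\det y_{\xi,t}\neq 0$ because $y_t=H_p$ is transversal to $\partial\mathcal{K}$. It does, however, need one extra fact: optimal starting points of points near $\xi_0$ converge to $\xi_0$ with vanishing time.
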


\noindent This is a key result with respect to the global optimality of numerically computed
optimal trajectories for FFTOP. Note that Thm.~\ref{theorem:smoothness_value_function} holds for any domain $\Omega_0 \subset \R^2$, such that the solution of~\eqref{chapter2:hjb_pde} on $\Omega_0$ coincides with the respective solution of $\eqref{chapter2:hjb_pde}$ on $\R^2$. In this case we will say that $\Omega_0$ is a \emph{travel-time preserving} subset of $\R^2$.  Furthermore, $\Sigma \cup \Gamma$ is effectively a one-dimensional set embedded in $\R^2$, indeed, $\Sigma \cup \Gamma$ is $\mathcal{H}^1$-rectifiable~\cite{PIGNOTTI2002681}. For the rest of this paper, we will therefore assume that $\Sigma \cup \Gamma$ is a countable collection of Lipschitz-continuous curves. This is relevant for bounding the discretization error for FFTOP in Thm.~\ref{theorem:error_estimate}. Under the same assumptions as in Thm. $\ref{theorem:smoothness_value_function}$, the closure of the singular set coincides with the union of the singular and conjugate sets, i.e., $\overline{\Sigma} = \Sigma \cup \Gamma$~\cite{Mennucci2000}[Theorem 3.8]. 
    
Using the above results, we construct an arbitrarily accurate approximation on a travel-time preserving $\Omega_\tau$ with $u \in C^2(\Omega_\tau, \mathbb{R})$ retaining the same optimal solutions of $\eqref{chapter2:hjb_pde}$ on $\Omega_\tau$ and removing all singularities. In order to do so we first need to define an exclusion neighborhood around the conjugate points.

\begin{definition}[$\tau$-nearly Conjugate Points]
     Let $u$ be a solution of~\eqref{chapter2:hjb_pde} and $y(\cdot, \cdot)$ the corresponding optimal trajectories satisfying the Hamiltonian system~\eqref{eq:HJB_pignotti}. Denote $t_{\rm max} > 0$ as the maximum exit time, and the exit time function by $t(\xi) := \min\{t>0\mid (y(\xi, t) \in \Sigma \cup \Gamma) \lor t = t_{\rm max} \}$. For small $\tau > 0$ define $\Gamma_\tau:=\{y(\xi, t) \mid \exists\xi \in \partial\mathcal{K} : y(\xi, t(\xi)) \in \Gamma, t \geq t(\xi) - \tau\} \cup \{y(\xi, t) \mid \xi, \xi' \in \partial \mathcal{K} : y(\xi,t(\xi)) \in \Sigma, y(\xi', t(\xi')) \in \Gamma,  \lVert y(\xi,t(\xi)) - y(\xi', t(\xi'))\rVert \leq \tau \overline{v}, t \geq t(\xi) - \tau   \}$ 
     (see magenta area in Fig.~\ref{fig:cut_locus_example}). 
     If $x \in \Gamma_\tau$, then we will say that $x$ is $\tau$-nearly conjugate. 
     \label{def:nearly_conj_pt}
\end{definition}

\begin{theorem}[Regularity]
    Let $u$ be a solution to $\eqref{chapter2:hjb_pde}$ and $y(\cdot, \cdot)$ the corresponding optimal trajectories satisfying the Hamiltonian system $\eqref{eq:HJB_pignotti}$. 
    Let $\Gamma_\tau$ be a set of $\tau$-nearly conjugate points (Def. $\ref{def:nearly_conj_pt}$). Denote a region of regular points $\Omega_\tau := \{y(\xi, t) \mid \xi \in \partial \mathcal{K}, t < t(\xi)\} \setminus  \Gamma_\tau$. Then for any $\min\limits_{\xi \in \partial \mathcal{K}} t(\xi) > \tau > 0$, the solution to $\eqref{chapter2:hjb_pde}$ on $\Omega_\tau$ coincides with $u$, and $\sup_{x \in {\Omega}_\tau} \lVert u_{xx}(x)\rVert < \infty$.
    \begin{proof}
        First, notice that by construction $\Omega_\tau$ is open and travel-time preserving. Consider some $x \in \Omega_\tau$. By construction of $\Omega_\tau$, $x \notin \Sigma \cup \Gamma$. In addition, there exists $\xi \in \partial \mathcal{K}$, such that $y(\xi, \cdot)$ is the optimal trajectory to $x$ on the domain $\Omega_0$. Note that the same optimal trajectory is valid on $\Omega_\tau$ as well, again by construction of $\Omega_\tau$. Then, $y(\xi, \cdot)$ is a unique such trajectory on $\Omega_\tau$, otherwise, $x$ would have been singular on $\Omega_0$. It also cannot be conjugate on the new domain $\Omega_\tau$, since this would also imply it was conjugate on $\R^2$. Therefore, the solution to $\eqref{chapter2:hjb_pde}$ on $\Omega_\tau$ coincides with $u$. In addition, since $\overline{\Sigma} = \Sigma \cup \Gamma$ and $\Omega_\tau$ being travel-time preserving, Thm.~\ref{theorem:smoothness_value_function} yields $u \in C^2(\Omega_\tau, \mathbb{R})$. 

        Now we will show that $\sup_{x \in \Omega_\tau} \lVert u_{xx}(x) \rVert < \infty$. We argue by contradiction. Suppose that $\sup_{x \in {\Omega}_\tau} \lVert u_{xx}(x)\rVert = \infty$, that is, for some $\overline{x} \in \overline{\Omega}_\tau$ there exists a sequence of regular points $(x_i)_{i \in \mathbb{N}}$ with $\lVert u_{xx} (x_i) \rVert \to \infty$, ${x}_i \to \overline{x}$. We distinguish three cases:
        
        (i) $\overline{x} \notin \partial\Omega_\tau$. One can define a compact subset of $\mathrm{int}(\Omega_\tau)$, and since $u_{xx} \in C^2(\Omega_\tau)$, we obtain $\sup_{x \in {\Omega}_\tau} \lVert u_{xx}(x)\rVert < \infty$, which is a contradiction.

        (ii) $\overline{x} \in \Sigma \cap \partial \Omega_\tau$. Since $\overline{x}$ is not conjugate by construction, the number of its distinct optimal trajectories (or limiting gradients) is finite, i.e., $k:= |\mathcal{F}(\overline{x})| < \infty$ ~\cite{PIGNOTTI2002681}[Theorem 3.2]. Simultaneously, this means that the neighborhood around $x$ is locally split into $k$ regular regions with $\Sigma$ in between them. This implies that there exists a convergent subsequence of $(x_i)_{i \in \mathbb{N}}$ contained in exactly one of those regions. Denote such open region near $\overline{x}$ in this neighborhood as $Q$ with $\overline{x} \in \partial Q$, and the aforementioned subsequence as $(\overline{x}_i)_{i \in \mathbb{N}}$. Note that for each $\overline{x}_i \in Q$ there exist unique $\overline{\xi}_i, \overline{t}_i$, such that, $y(\overline{\xi}_i, \overline{t}_i) = \overline{x}_i$. Moreover, $y$ is a bijection between $y^{-1}[Q]$ and $Q$, and since $y \in C^2$, $\lim_{i \to \infty} y(\overline{\xi}_i, \overline{t}_i) = y(\overline{\xi}, u(\overline{x}))$ with $\overline{\xi}_i \to \overline{\xi}$. Here $\overline{\xi} \in \partial \mathcal{K}$ is such that $p(\overline{\xi},u(\overline{x}))$ is the limiting gradient of $\overline{x}$ on $Q$.       Therefore, $\lVert p_{\xi, t}(\overline{\xi}_i, \overline{t_i})\rVert \to \lVert p_{\xi, t}(\overline{\xi}, u(\overline{x}))\rVert$ and $\lVert [y_{\xi, t}(\overline{\xi}_i, \overline{t_i})]^{-1}\rVert \to \lVert [y_{\xi, t}(\overline{\xi}, u(\overline{x}))]^{-1}\rVert$.

        Notice that $p(\overline{\xi}_i, \overline{t}_i) = u_x(y(\overline{\xi}_i, \overline{t}_i))$ holds for all $(\overline{\xi}_i, \overline{t}_i) \in y^{-1}[Q]$. As $y, p \in C^2$, differentiating both sides of it yields
        \begin{equation}
            u_{xx}(\overline{x}_i) = p_{\xi, t}(\overline{\xi}_i, \overline{t}_i) [y_{\xi, t}(\overline{\xi}_i, \overline{t}_i)]^{-1}, \quad (\overline{\xi}_i, \overline{t}_i) \in y^{-1}[Q].
            \label{eq:hessian_explicit}
        \end{equation}
        \noindent For a fixed $y(0) = \overline{\xi}$, Jacobians $\overline{y} = [y_{\xi, t}, p_{\xi, t}]^T$ satisfy the following system \cite{PIGNOTTI2002681}:
        \begin{align}
            \overline{y}_t(t) = \overline{H}(y(t), p(t))  \overline{y}(t), \quad \text{where} \quad\overline{H} = \begin{bmatrix}
                H_{px} \quad H_{pp} \\
                -H_{xx} \quad -H_{xp} \\
            \end{bmatrix}.
            \label{eq:jacobians_pignotti}
        \end{align}
        \noindent Using Gr\"onwall's lemma for $\eqref{eq:jacobians_pignotti}$ yields $ \lVert \overline{y}(u(\overline{x})) \rVert < \infty$ since $H \in C^3$, which in turn implies $\lVert y_{\xi, t}(\overline{\xi}, u(\overline{x})) \rVert < \infty$ and $\lVert p_{\xi, t}(\overline{\xi}, u(\overline{x})) \rVert < \infty$. Since $\overline{x} \notin \Gamma$, $\det (y_{\xi, t}(\overline{\xi}, u(\overline{x}))) \neq 0$. Now, $\partial\Omega_\tau \cap \Sigma$ is closed, and so is its pre-image under $y_{\xi, t}$, on which there exists a minimum of $|\mathrm{det}(y_{\xi,t}(\cdot, \cdot))|$ bounding it away from zero. Therefore, $\lVert [y_{\xi, t}(\overline{\xi}, u(\overline{x}))]^{-1}\rVert = \lVert y_{\xi, t}(\overline{\xi}, u(\overline{x}))\rVert / \det (y_{\xi, t}(\overline{\xi}, u(\overline{x}))) < \infty$. Finally, using the original assumption the LHS of $\eqref{eq:hessian_explicit}$ diverges to $\infty$, while the RHS is bounded using the above arguments implying a contradiction.

        (iii) $\overline{x} \in \partial \Omega_\tau \setminus \Sigma$. Now, $\overline{x}$ is regular with a single regular neighborhood around $\overline{x}$, and an equivalent argument as for $\overline{x} \in \Sigma \cap \partial \Omega_\tau$ follows.
        \end{proof}
    \label{theorem:cutting_out_domain}
\end{theorem}

\subsection{Discretization}
Now we will focus on the discretization of the HJB equation $\eqref{chapter2:hjb_pde}$. Let $\mathcal{T}_h$ be a regular triangulation of $\overline{\Omega}_\tau$, covering $\overline{\Omega}_\tau$, such that $T\cap \Omega_\tau \ne \emptyset$ for all $T\in\mathcal{T}$, with $h$ as the maximum diameter and $h_\perp$ as the minimum altitude of  triangles $T\in \mathcal{T}_h$, and let $\theta = h/h_\perp$ be the mesh aspect ratio.
The finite element space of continuous and piecewise linear functions is $V_h=\{ v\in C(\overline{\Omega}_\tau,\mathbb{R})\mid v|_T \in \mathbb{P}_1\}$. $v_h\in V_h$ is uniquely determined by its values $v_h(x_h)$ on the nodes $x_h\in\mathcal{N}_h$ of $\mathcal{T}_h$. Let $\Omega_h := \mathcal{N}_h\cap (\overline{\Omega}_\tau \setminus\mathcal{K})$ and $\partial \Omega_h := \mathcal{N}_h\cap \partial\mathcal{K}$. The Hopf-Lax update $\Lambda_h:V_h\to V_h$ is defined as
    \begin{align}
        (\Lambda_h v_h)(x_h) = \begin{cases}
            \min\limits_{y \in \partial \omega_h(x_h)} \left (   
            v_h(y) + \frac{\lVert x_h - y\rVert}{f \left(x_h, \frac{
            x_h - y}{\lVert x_h - y\rVert} \right)}
            \right), \quad  &x_h \in \Omega_h, \\
            g(x_h), \quad &x_h \in \partial \Omega_h,
        \end{cases}
        \label{eq:FE_update_formula}
    \end{align}
where $\omega_h(x_h) = \bigcup_{T\in\mathcal{T}_h: x_h\in T} T$ is the patch around node $x_h$. 

The FE solution $\uF$ is the unique fixed point of $\Lambda_h$, i.e., $\Lambda_h \uF = \uF$, following the notation in Bornemann \& Rasch \cite{Bornemann2006}. 
Note that a necessary requirement for their analysis to hold is $\overline{\Omega}_\tau$ having a locally Lipschitz boundary. In our case it is satisfied due to sufficient regularity of the initial conditions on $\eqref{eq:HJB_pignotti}$, and $\Sigma \cup \Gamma$ being a collection of Lipschitz continuous curves. In \cite{Bornemann2006} one can also find the existence and uniqueness proofs as well as a proof of convergence to the viscosity solution. An explicit linear error bound is obtained in regions where $u$ is $C^2$ and a weaker result of order $\sqrt{h}$ for the general case~ \cite{Rasch2007}. Following the proof of Rasch~\cite{Rasch2007}, plugging in and estimating some constants adapted to our case, we extend the discretization error estimate, such that it applies for all $x \in \overline{\Omega}_\tau$, i.e., everywhere but on $\Gamma_\tau$. 

\begin{theorem}[Discretization Error
\label{theorem:error_estimate}
\cite{Rasch2007}]
Let $u$ be a solution to $\eqref{chapter2:hjb_pde}$ on $\overline{\Omega}_\tau$ with a locally Lipschitz boundary. Using Thm.~\ref{theorem:cutting_out_domain}, denote $\mathcal{H}^\star := \sup\limits_{x \in \Omega_\tau} \lVert u_{xx}(x) \rVert$. Let $\overline{c}_0 := \sup\limits_{x \in \R^2} \lVert w(x) \rVert$, $\overline{c}_1 = \sup\limits_{x \in \R^2} \lVert w_x(x) \rVert$, and $I_h$ be the FE nodal interpolator. Then, for all sufficiently small $h>0$, the following holds:
\begin{align}
     \varepsilon(h) &:=\max\limits_{x \in \overline{\Omega}_\tau} |u_h(x) - u(x) | \\
     &\leq~h \theta \left ( \mathcal{H}^\star + 2 \overline{c}_1 \sqrt{\overline{v}}  \frac{(\overline{v} + \overline{c}_0)^2}{(\overline{v} - \overline{c}_0)^{9/2}} \right )  (\overline{v} + \overline{c}_0) \lVert I_hu \rVert_\infty + \frac{7h}{\overline{v} - \overline{c}_0}. \label{eq:disc-error-bound}
\end{align}
    \begin{proof}
    First, note that for any $x \in \Omega_\tau$ in a simplex $T \in \mathcal{T}_h$ there exists $x_h \in \Omega_h \cap T$, such that:
    \begin{equation}
        |u_h(x) - u(x)| \leq |u_h(x_h) - u(x_h)| + \frac{h}{\overline{v} - \overline{c}_0}. \label{eq:thm2:1}
    \end{equation}
    Therefore, it is sufficient to focus on nodes $\Omega_h$. Denote $\Omega_\tau^h = \{x_h \in \Omega_h \mid \omega_h(x_h) \subset \Omega_\tau \}$. We distinguish two cases:
    
    (i) $x_h \in \Omega_\tau^h$. Define the slowness function as $\overline{f}(\cdot, \cdot) := 1/f(\cdot, \cdot)$. We show in Lem.~\ref{lemma:wind_constant} in the appendix that for any fixed $p \in S_1$,  $\overline{f}(\cdot, p)$ is Lipschitz continuous with a constant 
    \begin{equation}
    L = \overline{c}_1 \sqrt{\frac{\overline{v} }{(\overline{v} -          \overline{c}_0)^{5}} }
    \end{equation}
    \noindent that is independent of $p$. Thm.~2.15 from~\cite{Rasch2007} yields
\begin{align}
    \max\limits_{x_h \in \Omega_\tau^h} |u(x_h) - \Lambda_h I_h u (x_h) | &\leq h^2 \left ( \frac{1}{2} \mathcal{H}^\star + L \left ( \frac{\overline{v} + \overline{c}_0}{\overline{v} - \overline{c}_0} \right )^2 \right ) \\ 
    &\leq h^2 \left ( \frac{1}{2} \mathcal{H}^\star + \overline{c}_1 \sqrt {\frac{\overline{v}}{(\overline{v} - \overline{c}_0)^5}} \left ( \frac{\overline{v} + \overline{c}_0}{\overline{v} - \overline{c}_0} \right )^2 \right ) \\
    &=: h \mu (h),
\end{align}
where $\mu(h)$ is referred to as the \textit{modulus of continuity} of $I_hu$. Combining this result and Lem.~2.16 from~\cite{Rasch2007}, we obtain the error bound on $\Omega_\tau^h$:
\begin{align}
    &\hphantom{\leq}\max\limits_{x_h \in \Omega_\tau^h} |u_h(x_h) - u (x_h) | \\
    &=\max\limits_{x_h \in \Omega_\tau^h} |u_h(x_h) - I_h u (x_h) |\\ &\leq 2\theta \mu(h) (\overline{v} + \overline{c}_0) \lVert I_hu \rVert_\infty \\
    &\leq h \theta \left ( \mathcal{H}^\star + 2 \overline{c}_1 \sqrt {\frac{\overline{v}}{(\overline{v} - \overline{c}_0)^5}} \left ( \frac{\overline{v} + \overline{c}_0}{\overline{v} - \overline{c}_0} \right )^2 \right )  (\overline{v} + \overline{c}_0) \lVert I_hu \rVert_\infty. \label{eq:thm2:2}
\end{align}

(ii) $x_h \notin \Omega^h_\tau$. First we assume $\omega_h(x_h) \cap(\Sigma \cup \Gamma_\tau) \neq \emptyset$. For a small enough $h>0$, there exists $y_h \in \Omega_\tau^h$, such that, $\lVert x_h - y_h \rVert \leq 2h$ since $\Sigma \cup \Gamma$ is a collection of Lipschitz continuous curves. Using Thm.~8 from \cite{Bornemann2006} we get
\begin{align}
    |u_h(x_h) - u(x_h)| &\leq |u(y_h) - u(x_h)| +  |u_h(x_h) - u_h(y_h)| + |u_h(y_h) - u(y_h)|  \\
    &\leq \frac{\lVert x_h - y_h \rVert}{\overline{v} - \overline{c}_0} + 2\frac{\lVert x_h - y_h \rVert}{\overline{v}-\overline{c}_0} + |u_h(y_h) - u(y_h)|  \\
    &\leq \frac{6h}{\overline{v}-\overline{c}_0} + |u_h(y_h) - u(y_h)|. \label{eq:thm2:3}
\end{align}
\noindent In the other case $(\omega_h(x_h) \cap \partial \Omega_\tau) \setminus (\Sigma \cup \Gamma_\tau) \neq \emptyset$, an analogous argument as above holds, since $\Omega_\tau$ has a locally Lipschitz boundary. Finally, combining $\eqref{eq:thm2:1}$, $\eqref{eq:thm2:2}$, and $\eqref{eq:thm2:3}$ yields the claim~\eqref{eq:disc-error-bound}.
\end{proof}
\end{theorem}

\subsection{Localizing the Singularities}

Now we use the discretization error estimate~\eqref{eq:disc-error-bound} to localize the singularities $\Sigma \cup \Gamma$ of the HJB equation $\eqref{chapter2:hjb_pde}$. Instead of spatially localizing $\Sigma \cup \Gamma$, we work temporally using the framework introduced in Sec.~\ref{section:regularity}. Note that upon solving~\eqref{chapter2:hjb_pde}, one obtains not only arrival times, but also the predecessor directions. By considering the discrete divergence of the predecessor field (or simply by setting the threshold for the angle between the predecessor directions) we can identify a set of simplices $(\Sigma_h \cup \Gamma_h) \subset \mathcal{T}_h$ corresponding to the set $\Sigma \cup \Gamma$. 

\begin{theorem}[Cut Loci Position]
    Let $u$ be a solution to $\eqref{chapter2:hjb_pde}$ and $y(\cdot, \cdot)$ the corresponding optimal trajectory satisfying the Hamiltonian system~\eqref{eq:HJB_pignotti}. Denote by $(\Sigma \cup \Gamma)_\tau := \{y(\xi, t) \mid \exists\xi \in \partial\mathcal{K} : y(\xi, t(\xi)) \in \Sigma \cup \Gamma, t \geq t(\xi) - \tau\}$ the neighborhood around cut loci, see Fig. $\ref{fig:cut_locus_example}$. If $\varepsilon =\varepsilon(h)$ is the maximal discretization error restricted to $\overline{\Omega}_\tau$ following the notation from Thm. $\ref{theorem:error_estimate}$, and $\Sigma_h \cup \Gamma_h \subset \{y(\xi, t) \mid \exists\xi \in \partial\mathcal{K} : y(\xi, t(\xi)) \in \Sigma \cup \Gamma, t \leq t(\xi)\}$, then for a small enough $h>0$ we have $(\Sigma_h \cup \Gamma_h) \subset (\Sigma \cup \Gamma)_{2\varepsilon} $, that is, $\Sigma \cup \Gamma$ is contained in a $2 \varepsilon$ trust region around estimated $\Sigma_h \cup \Gamma_h$.
\begin{proof}
Let $\tau_0$ be the smallest time for which $(\Sigma \cup \Gamma)_{\tau_0}$ contains $\Sigma_h \cup \Gamma_h$, i.e.,
\begin{equation} \label{eq:tau0}
\tau_0 := \inf \{\tau > 0 \mid (\Sigma_h \cup \Gamma_h) \subset (\Sigma \cup \Gamma)_\tau\}.
\end{equation}

\noindent By construction of $(\Sigma \cup \Gamma)_{\tau_0}$, there exists at least one $A_h \in (\Sigma_h \cup \Gamma_h) \cap \partial ((\Sigma \cup \Gamma)_{\tau_0})$. Then there exists a unique optimal trajectory $y$ satisfying $\eqref{eq:HJB_pignotti}$, such that, $y(\xi, u(A_h)) = A_h$ for some $\xi \in \partial \mathcal{K}$. Following this optimal trajectory, we arrive at $A \in \Sigma \cup \Gamma$ with $y(\xi, u(A)) = A$ and $\tau_0 = u(A)  - u(A_h)$. $\tau_0$ can be decomposed as follows:
\begin{align}
    u(A)  - u(A_h) &\leq |u(A)  - u_h(A)| + |u_h(A_h)  - u(A_h)| + (u_h(A)  - u_h(A_h)) \\
    &\leq 2\varepsilon + u_h(A)  - u_h(A_h),
\end{align}
\noindent where we have applied Thm. $\ref{theorem:error_estimate}$ to get the discretization error estimate for $A$ and $A_h$. It is now enough to note that $u_h(A_h) > u_h(A)$ for small enough $h$ as $A_h$ is an estimated singularity (hence locally maximal arrival time) in the FE solution. Therefore, $\tau_0 < 2\varepsilon$, implying $(\Sigma_h \cup \Gamma_h) \subset (\Sigma \cup \Gamma)_{2\varepsilon} $. The final result follows by noting that the latter statement can be mirrored. That is, saying that $\Sigma_h \cup \Gamma_h$ is contained in a $2\varepsilon$ trust region around $\Sigma \cup \Gamma$ is equivalent to saying that $\Sigma \cup \Gamma$ is contained in a $2\varepsilon$ trust region around $\Sigma_h \cup \Gamma_h$.

\end{proof}
\label{theorem:cut_loci_error}
\end{theorem}

\begin{remark1}
    Note that even though the discretization error estimate from Thm. $\ref{theorem:error_estimate}$ does not hold explicitly on $\Gamma_\tau$, the chances of $A, A_h \in \Gamma_\tau$ are vanishingly small. Pignotti \cite{PIGNOTTI2002681} shows that $\Gamma$ is effectively a zero-dimensional subset in $\mathbb{R}^2$.
\end{remark1}

Therefore, Thm. $\ref{theorem:cut_loci_error}$ yields a direct practical tool for assessing whether the destination lies outside the region $(\Sigma_h \cup \Gamma_h) \subset (\Sigma \cup \Gamma)_{2\varepsilon}$ containing all singularities, and thus providing an approximation of the unique globally optimal trajectory.

\section{Numerical Results} \label{sec:results}
In this section we will numerically verify the linear convergence of our method and validate the discretization error bound from the previous chapter. In addition, we will demonstrate a practical application of the $2\varepsilon$ trust region around the approximate cut loci $\Sigma_h \cup \Gamma_h$. 

We consider a domain $\Omega_0=(0,1)^2 \subset \mathbb{R}^2$ with the origin fixed at $x_0 = [0.0, 0.5]$.
The airspeed is $\overline{v} = 1$ and the maximum wind magnitude $\overline{c}_0 = 0.5$. The boundary condition $g : \partial \mathcal{K} \to \mathbb{R}$ is defined on $\mathcal{B}_{0.1}(x_0)$ and computed using an efficient optimal control solver, cf.~\cite{discopter}.

We discretize the domain using a uniform distribution of points as a base for a subsequent Delaunay triangulation. The FE solution $u_h$ $\eqref{eq:FE_update_formula}$ satisfying $\Lambda_h u_h = u_h$ is computed using fixed point Jacobi iteration \cite{Bornemann2006,Rasch2007}. For the wind field $w$ we set up multiple test cases of increasing complexity:

a) A constant wind field with $w = \overline{c}_0 \times [1, -2]/\sqrt{5}$.

b) An everywhere smooth Gaussian wind vortex with a clockwise spin $s = -1$ (or counterclockwise $s = +1$):
\begin{align}
    w(x) = \left[ 
    \begin{aligned}
        - & s \overline{w}(r)   \sin(\alpha) \\
         & s  \overline{w}(r) \cos(\alpha)
    \end{aligned}
    \right],
\end{align}
where $\alpha = \arctan{\frac{(\textbf{x} - \textbf{z})_2}{(\textbf{x} - \textbf{z})_1}}$, and $R$ and $z$ are radius and vortex center, respectively. The radial basis function $\overline{w}(r)$ for $r = \lVert x - R\rVert $ is defined as follows:
\begin{equation}
	\overline{w}(r) = \gamma \frac{\beta r}{R}\exp \left(- \frac{\beta^2 r^2}{R^2} \right),
\end{equation}
where $\beta = 3, \gamma = \sqrt{2 e}.$ Here $R = 1/3$ and $z = [0.5, 0.5]$. The Jacobian of the wind field in this case is then bounded by $\overline{c}_1 = \beta \gamma$.

c) An array of 15 Gaussian wind vortices, arranged as in Fig. \ref{fig:cut_loci_trust_region}.

d) An array of 70 Gaussian wind vortices (similar to case (c)).

\noindent For each of the test problems (a) -- (d), we investigate the FE discretization error and demonstrate linear convergence of the algorithm. To eliminate the influence of the truncation error, we select the respective relative tolerance to be $\text{tol} = 10^{-10}$. We compare to a reference FE solution computed on a $1001 \times 1001$ grid. Fig.~$\ref{fig:convergence}$ shows that the $\lVert u - u_h \rVert _{\infty}$ error is of order $\mathcal{O}(h)$ for all test problems (a)--(d), as expected. Finally, we display the respective discretization error estimates computed using Thm.~\ref{theorem:error_estimate} on the corresponding $\Omega_\tau$. 

In addition, for test problem (c) we compute the temporal $2\varepsilon$ trust region around $\Sigma_h \cup \Gamma_h$. We observe that the aforementioned trust region encompasses the true cut locus computed using a $1001 \times 1001$ resolution, see Fig. $\ref{fig:cut_loci_trust_region}$. As expected, $\Sigma_h \cup \Gamma_h$ experiences spatial displacement due to error propagation, see Fig. $\ref{fig:cut_loci_trust_region}$(b). Nevertheless, the $2\varepsilon$ trust region captures this displacement (up to an accuracy of a single simplex, since the accuracy of $\Sigma_h \cup \Gamma_h$ is as such). Therefore, if the desired destination is outside the $2\varepsilon$ trust region, we can deduce that the candidate trajectory of the corresponding HJB equation $\eqref{chapter2:hjb_pde}$ is convergent to a unique globally optimal trajectory. 

\begin{figure}[H]
    \centering        
    \includegraphics[width=0.8\textwidth, trim={0cm 0cm 1cm 1.4cm}, clip]{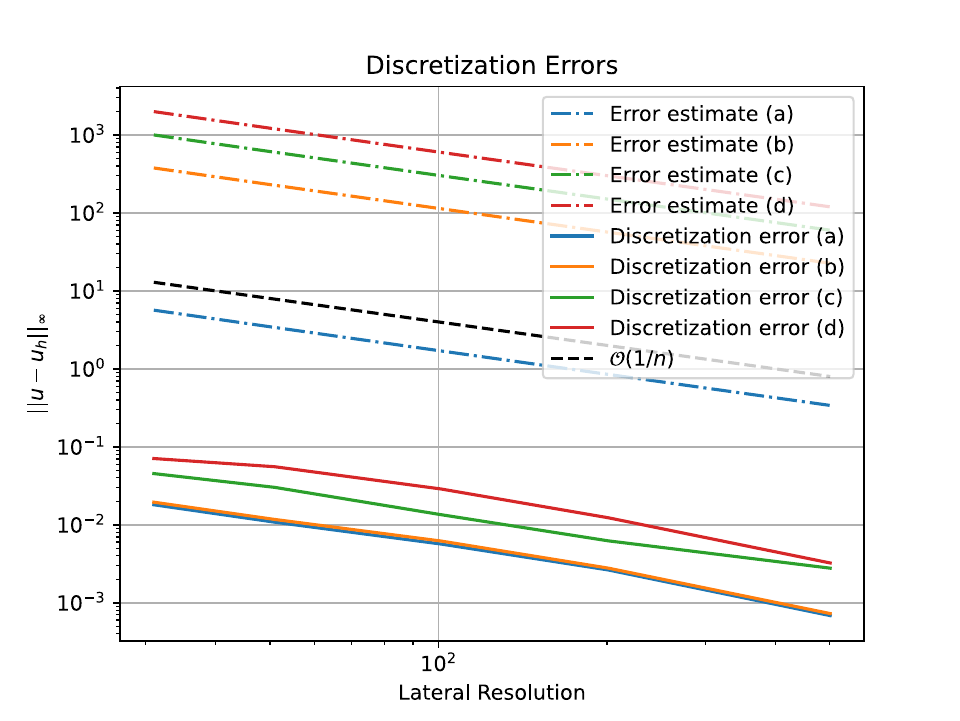}
    \caption{The discretization errors are displayed against their estimates for cases (a)--(d) with varying FE mesh size}
    \label{fig:convergence}
\end{figure}

\begin{figure}[H]
    \centering
    \begin{subfigure}{0.48\textwidth}
        \centering
        \includegraphics[width=\linewidth, trim={0.93cm 0.93cm 0.93cm 0.93cm}, clip]{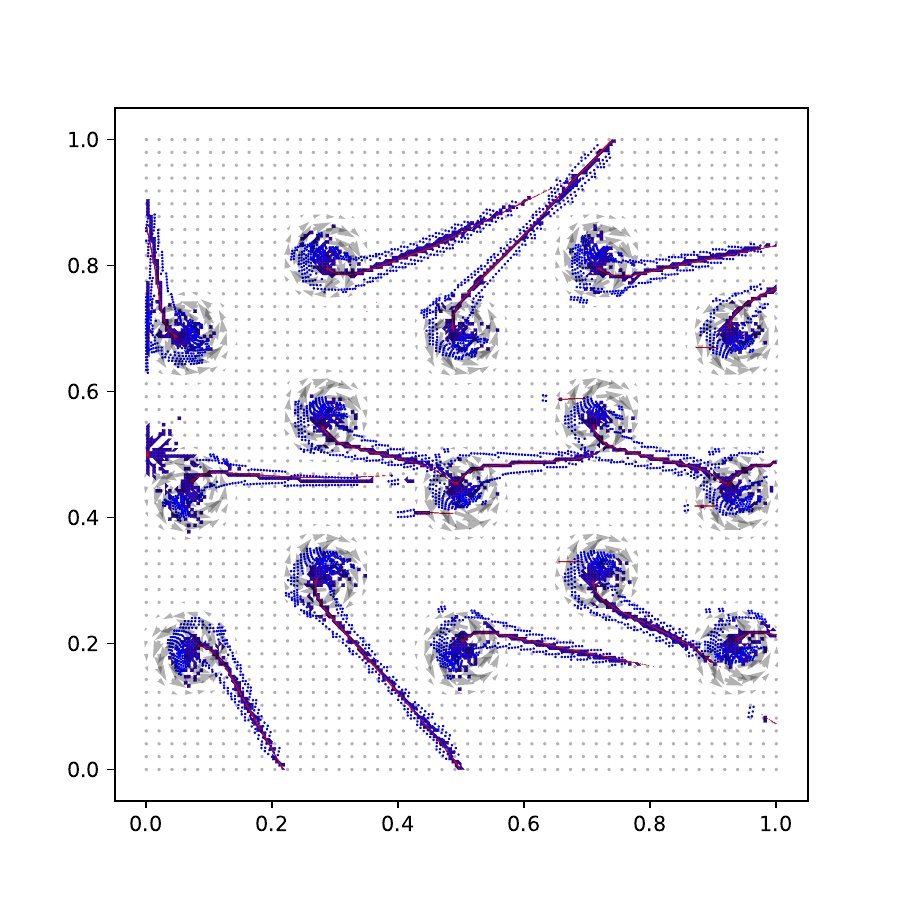}
        \caption{}
    \end{subfigure}
    \hfill
    \begin{subfigure}{0.48\textwidth}
        \centering
        \includegraphics[width=\linewidth, trim={0.93cm 0.93cm 0.93cm 0.93cm}, clip]{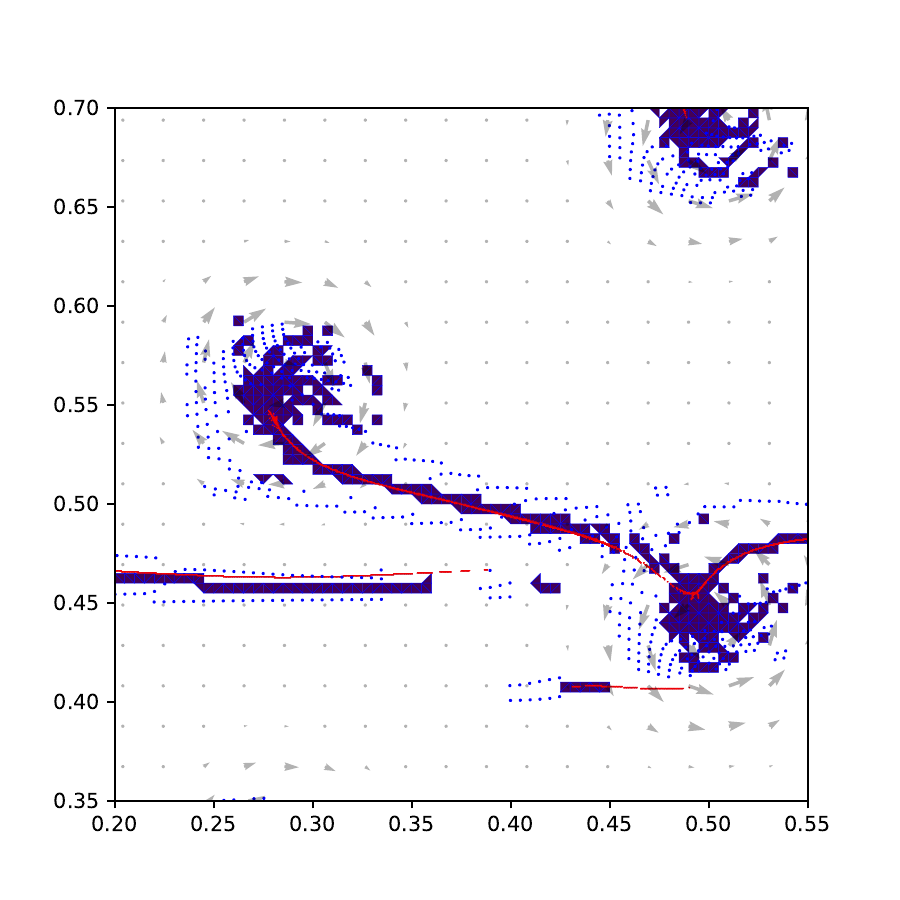}
        \caption{}
    \end{subfigure}
    \caption{Visualization of the temporal $2 \varepsilon$ trust region (blue points) around $\Sigma_h \cup \Gamma_h$ (blue triangles) computed using an FE solution with a $201 \times 201$ resolution. The FE solution with a $1001 \times 1001$ resolution is used as a reference solution to compute the a posteriori discretization error $\varepsilon$ and the cut loci $\Sigma \cup \Gamma$ (red triangles). The right plot is a zoomed-in version of the left plot depicting a positional shift in $\Sigma_h \cup \Gamma_h$}
    \label{fig:cut_loci_trust_region}
\end{figure}

\section{Acknowledgments}

\noindent Funded by the Deutsche Forschungsgemeinschaft (DFG, German Research
Foundation) under Germany's Excellence Strategy – The Berlin Mathematics
Research Center MATH+ (EXC-2046/1, EXC-2046/2, project ID: 390685689).

\printbibliography

@book{Bellman1957,
	title        = {{Dynamic Programming}},
	author       = {Bellman, R.},
	year         = 1957,
	publisher    = {Princeton University Press},
}

@phdthesis{Blanco2023,
	title        = {{Optimization Algorithms for the Flight Planning Problem}},
	author       = {Blanco Sandoval, M.D.},
	year         = 2023,
	school       = {Freie Universität Berlin},
}

@Article{discopter,
AUTHOR = {Bornd{\"o}rfer, R. and Danecker, F. and Weiser, M.},
TITLE = {A Discrete-Continuous Algorithm for Free Flight Planning},
JOURNAL = {Algorithms},
VOLUME = {14},
YEAR = {2021},
NUMBER = {1},
ARTICLE-NUMBER = {4},
DOI = {10.3390/a14010004}
}

@Article{Bornemann2006,
author={Bornemann, F. and Rasch, C.},
title={Finite-element Discretization of Static Hamilton-Jacobi Equations based on a Local Variational Principle},
journal={Computing and Visualization in Science},
year={2006},
volume={9},
number={2},
pages={57--69},
doi={10.1007/s00791-006-0016-y}
}

@book{cannarsa2004,
  title={Semiconcave Functions, Hamilton-Jacobi Equations, and Optimal Control},
  author={Cannarsa, P. and Sinestrari, C.},
  series={Progress in Nonlinear Differential Equations and Their Applications},
  year={2004},
  publisher={Springer}
}

@article{crandall1983,
 author = {M.G. Crandall and P.-L. Lions},
 journal = {Transactions of the American Mathematical Society},
 number = {1},
 pages = {1--42},
 publisher = {American Mathematical Society},
 title = {Viscosity Solutions of Hamilton-Jacobi Equations},
 volume = {277},
 year = {1983}
}

@phdthesis{Danecker_thesis,
	title        = {{A Discrete-Continuous Algorithm
for Globally Optimal Free Flight Trajectory Optimization}},
	author       = {Danecker, Fabian},
	year         = 2023,
	school       = {Freie Universit{\"a}t Berlin}
}

@misc{RAD,
	title        = {{Road Availability Document (RAD)}},
	author       = {{EUROCONTROL}},
	url          = {https://www.nm.eurocontrol.int/RAD/},
	urldate      = {2026-02-16},
}

@misc{Eurostat2025,
	title        = {{Air passenger transport by reporting country}},
	author       = {{Eurostat}},
	year         = 2025,
	url          = {https://ec.europa.eu/eurostat/databrowser/view/avia_paoc/default/table?lang=en&category=avia.avia_pa.avia_pao},
	urldate      = {2026-02-05},
}

@book{evans10,
  address = {Providence, R.I.},
  author = {Evans, L.C.},
  publisher = {American Mathematical Society},
  title = {Partial differential equations},
  year = 2010
}

@inproceedings{GirardetEtAl2014,
	title        = {Wind-Optimal Path Planning: Application to Aircraft Trajectories},
	author       = {B. Girardet and L. Lapasset and D. Delahaye and C. Rabut},
	year         = 2014,
	booktitle    = {13th International Conference on Control Automation Robotics Vision (ICARCV)},
	pages        = {1403--1408},
	doi          = {10.1109/ICARCV.2014.7064521}
}

@online{iata_passengers_2024,
  author       = {{International Air Transport Association}},
  title        = {Global Air Passenger Demand Reaches Record High in 2024},
  year         = {2025},
  url          = {https://www.iata.org/en/pressroom/2025-releases/2025-01-30-01/},
  note         = {Reports a total of about 4.89 billion air passengers worldwide in 2024},
}

@online{iata_outlook_2024,
  author       = {{International Air Transport Association}},
  title        = {Global Outlook for Air Transport -- June 2024 Update},
  year         = {2024},
  url          = {https://www.iata.org/en/pressroom/2025-releases/2025-06-02-01/},
  note         = {Reports ~99 billion gallons of jet fuel consumption in 2024}
}

@online{irena_aviation_transport,
  author       = {{International Renewable Energy Agency}},
  title        = {Decarbonising Hard-to-Abate Sectors: Aviation},
  year         = {2023},
  url          = {https://www.irena.org/Decarbonising-hard-to-abate-sectors-with-renewables-Enablers-and-recommendations/Transport-sector/Aviation},
  note         = {Aviation accounts for ~10\% of global transport energy use (IEA data synthesis)}
}

@Inbook{Mennucci2000,
author="Mennucci, Andrea C. G.",
editor="Djaferis, Theodore E.
and Schick, Irvin C.",
title="Regularity of Solutions to Hamilton-Jacobi Equations",
bookTitle="System Theory: Modeling, Analysis and Control",
year="2000",
publisher="Springer US",
address="Boston, MA",
pages="63--74",
isbn="978-1-4615-5223-9",
doi="10.1007/978-1-4615-5223-9_5",
url="https://doi.org/10.1007/978-1-4615-5223-9_5"
}

@misc{mordor_intelligence,
	title        = {{Aviation Market Size \& Share Analysis - Growth Trends And Forecast (2025 - 2030)}},
	author       = {{Mordor Intelligence Research \& Advisory}},
	year         = 2025,
	url          = {https://www.mordorintelligence.com/industry-reports/aviation-market},
	howpublished = {Issue 4},
	urldate      = {2026-02-05},
}

@article{NgSridharGrabbe2014,
	title        = {Optimizing Aircraft Trajectories with Multiple Cruise Altitudes in the Presence of Winds},
	author       = {Ng, H.K. and Sridhar, B. and Grabbe, S.},
	year         = 2014,
	journal      = {Journal of Aerospace Information Systems},
	volume       = 11,
	number       = 1,
	pages        = {35--47},
	doi          = {10.2514/1.I010084}
}

@article{PIGNOTTI2002681,
title = {Rectifiability results for singular and conjugate points of optimal exit time problems},
journal = {Journal of Mathematical Analysis and Applications},
volume = {270},
number = {2},
pages = {681-708},
year = {2002},
doi = {https://doi.org/10.1016/S0022-247X(02)00110-5},
author = {C. Pignotti}
}

@book{RTCA1995,
	title        = {{Final Report of RTCA Task Force 3 Free Flight Implementation}},
	author       = {{Radio Technical Commission for Aeronautics}},
	year         = 1995,
	publisher    = {RTCA},
}

@phdthesis{Rasch2007,
	author = {Rasch, C.M.A.},
	title = {Numerical Discretization of Static Hamilton-Jacobi Equations on Triangular Meshes},
	year = {2007},
	school = {Technische Universität München},
	pages = {102}
}

@article{FMM,
	title        = {A Fast Marching Level Set Method for Monotonically Advancing Fronts},
	author       = {J.A. Sethian},
	year         = 1996,
	journal      = {Proceedings of the National Academy of Sciences of the United States of America},
	volume       = 93,
	number       = 4,
	pages        = {1591--1595}
}

@article{FMM2,
 author = {J.A. Sethian},
 journal = {SIAM Review},
 number = {2},
 pages = {199--235},
 publisher = {Society for Industrial and Applied Mathematics},
 title = {Fast Marching Methods},
 volume = {41},
 year = {1999}
}

@article{SethianOUM_first,
author = {J.A. Sethian  and A. Vladimirsky },
title = {Ordered upwind methods for static Hamilton–Jacobi equations},
journal = {Proceedings of the National Academy of Sciences},
volume = {98},
number = {20},
pages = {11069-11074},
year = {2001},
doi = {10.1073/pnas.201222998}
}

@article{Tsitsiklis1995,
	title        = {{Efficient Algorithms for Globally Optimal Trajectories}},
	author       = {Tsitsiklis, J.N.},
	year         = 1995,
	journal      = {IEEE Transactions on Automatic Control},
	volume       = 40,
	number       = 9,
	pages        = {1528--1538},
	doi          = {10.1109/9.412624},
}

@article{WellsEtAl2023,
title = {Minimising emissions from flights through realistic wind fields with varying aircraft weights},
journal = {Transportation Research Part D: Transport and Environment},
volume = {117},
pages = {103660},
year = {2023},
doi = {https://doi.org/10.1016/j.trd.2023.103660},
author = {C.A. Wells and P.D. Williams and N.K. Nichols and D. Kalise and I. Poll}
}

@article{Zermelo1930,
	title        = {{\"Uber die Navigation in der Luft als Problem der Variationsrechnung `On the Navigation in the Air as a Problem of the Calculus of Variations'}},
	author       = {Zermelo, E.},
	year         = 1930,
	journal      = {Jahresbericht der Deutschen Mathematiker-Vereinigung},
	volume       = 39,
	pages        = {44--48},
}

\appendix

\section{Appendix}

\begin{lemma} Consider the setting of Theorem~\ref{theorem:error_estimate} with the slowness function as $\overline{f}(\cdot, \cdot) := 1/f(\cdot, \cdot)$. Then for any fixed $p \in S_1$,  $\overline{f}(\cdot, p)$ is Lipschitz continuous with the corresponding Lipschitz constant independent of $p$:
\begin{equation}
 L = \overline{c}_1 \sqrt{\frac{\overline{v} }{(\overline{v} - \overline{c}_0)^{5}} }
\end{equation}
    \begin{proof}
         For a fixed fixed $p \in S_1$ we have:
         \begin{equation}
             \overline{f}(x, p) = \frac{-w(x)^Tp + \sqrt{\overline{v}^2 - \lVert w(x) \rVert^2 + (w(x)^Tp)^2 }}{\overline{v}^2 - \lVert w(x) \rVert^2}, \quad x \in \mathbb{R}^2.
         \end{equation}
         Since $\overline{v} > \overline{c}_0$, it follows that $\overline{f}(\cdot, p) \in C^1(\mathbb{R}^2, \mathbb{R})$. Therefore, we can find its derivative at each point $x \in \mathbb{R}^2$ explicitly:
    \begin{align}
        \overline{f}_{\partial x}(x, p) &= \frac{w_x(x)^Tp}{\overline{v}^2 - \lVert w(x) \rVert ^2} \left ( \frac{w(x)^Tp}{\sqrt{\overline{v}^2 - \lVert w(x) \rVert ^2 + (w(x)^Tp)^2}}  - 1 \right) + \\
        & \frac{w_x(x)^Tw(x)}{(\overline{v}^2 - \lVert w(x) \rVert ^2)^2} \left (  \frac{\overline{v}^2 - \lVert w(x) \rVert ^2}{\sqrt{\overline{v}^2 - \lVert w(x) \rVert ^2 + (w(x)^Tp)^2}}  + \right. \\
        &\left.\frac{2w(x)^Tp \left(w(x)^Tp - \sqrt{\overline{v}^2 - \lVert w(x) \rVert ^2 + (w(x)^Tp)^2} \right)}{\sqrt{\overline{v}^2 - \lVert w(x) \rVert ^2 + (w(x)^Tp)^2}}\right)
    \end{align}
    \noindent and upper bound it:
    \begin{align}
        &\lVert \overline{f}_{\partial x}(x, p) \rVert \\
        &\leq \lVert w_x(x) \rVert \left (\frac{\overline{v} + \lVert w(x) \rVert}{(\overline{v}^2 - \lVert w(x) \rVert ^2)^{3/2}} +  \frac{\lVert w(x) \rVert}{ (\overline{v} - \lVert w(x) \rVert )^{2} \sqrt{\overline{v}^2 - \lVert w(x) \rVert ^2} }\right) \\
        &=  \lVert w_x(x) \rVert  \left (\frac{\overline{v} }{(\overline{v} - \lVert w(x) \rVert )^{5/2} (\overline{v} + \lVert w(x) \rVert)^{1/2}} \right)
    \end{align}
    Therefore, the Lipschitz constant $L$ is given by:
    \begin{equation}
        L := \sup\limits_{x \in \mathbb{R}^2}  \lVert\overline{f}_{\partial x}(x, p)\rVert
        \leq \overline{c}_1 \sqrt{\frac{\overline{v} }{(\overline{v} - \overline{c}_0)^{5}} }.
    \end{equation}
    \end{proof}
    \label{lemma:wind_constant}
\end{lemma}

\end{document}